\theoremstyle{definition}
\journal{arxiv.org}
\begin{document}

\newtheorem{theorem}{Theorem}
\newtheorem{corollary}{Corollary}
\newtheorem{lemma}{Lemma}
\newtheorem{example}{Example}
\theoremstyle{definition}
\newtheorem{definition}{Definition}

\begin{frontmatter}



\title{On Some Types of Limit}


\author[BEU]{Ufuk Kaya}

\ead{mat-ufuk@hotmail.com}

\author[BEU]{Gökhan Turan}

\affiliation[BEU]{Department of Mathematics, Faculty of Arts and Sciences, Bitlis Eren University, Bitlis, Turkey}

\begin{abstract}
In this paper, we consider the concept of limit, one of the basic concepts of mathematical analysis. At a point $a\in{\mathbb{R}}$, the limit of a function $f$ from $A\subset\mathbb{R}$ to $\mathbb{R}$ is $L\in{\mathbb{R}}$ if and only if there exists $\delta>0$ such that the set
$$
\left\{x\in\left(\left(a-\delta,a+\delta\right)\backslash\left\{a\right\}\right)\cap A:\lvert f\left(x\right)-L\rvert \ge\varepsilon\right\}
$$
is the empty set for each $\varepsilon>0$. This study investigates what happens when the above set is not empty. More clearly, when we take the above set as finite or a set whose accumulation points is empty, we prove that the limit concept does not change. However, when we take the above set as countable or a set of measure zero, we prove that they are new limit concepts. Also, we show that these new limit concepts have the properties of existence, uniqueness, additivity, multiplicativity, etc. Finally, by giving some examples, we compare the limit concepts we have obtained with the previous ones.
\end{abstract}

\begin{keyword}
Limit \sep Approximate limit \sep Lebesgue measure

\MSC[2020]{26A03 \sep 26C06 \sep 28A20}




\end{keyword}

\end{frontmatter}


\section{Introduction}\label{sec1}

Limit is one of the most basic concepts of mathematics. Grégoire de Saint-Vincent was the first to mention the concept of limit in 1647. He use the concept for the convergence of geometric series.

Bernard Bolzano, who first proposed the fundamentals of the epsilon-delta technique to describe continuous functions in 1817, is credited with developing the contemporary definition of a limit. But, during his lifetime, his work was unknown.

The ($\varepsilon,\delta$)-definition of limit, which was defined by Karl Weierstrass and Augustin-Louis Cauchy in 1821, is a definition of the limit of a function.

In his book A Course of Pure Mathematics (1908), G. H. Hardy was the first to use the modern notation of placing the arrow below the limit symbol.

Below is the modern definition of the limit at a point $a\in\mathbb{R}$ of a function from $A\subset\mathbb{R}$ to $\mathbb{R}$:
For every real $\varepsilon>0$, there exists a real $\delta>0$ such that for all real $x\in A$, $0<\lvert x-a\rvert<\delta$  implies $\lvert f\left(x\right)-L \rvert<\varepsilon$.

According to the definition of the usual limit, the necessary and sufficient condition for the limit of a function from $A\subset\mathbb{R}$ to $\mathbb{R}$ to be $L\in\mathbb{R}$ at a point $a\in\mathbb{R}$ is there exists $\delta>0$ such that the set
\begin{equation}\label{eq1}
\mathcal{L}\left(f,A,a,L,\delta,\varepsilon\right)=\left\{x\in\left(\left(a-\delta,a+\delta\right)\backslash\left\{a\right\}\right)\cap A:\left|f\left(x\right)-L\right|\geq\varepsilon\right\}
\end{equation}
is empty for each $\varepsilon>0$.

In addition to the usual limit, there are other concepts of the limit. In 1915, A. Denjoy introduced the concept of approximate limit: Let $a\in\mathbb{R}$ and $f:\mathbb{R}\to\mathbb{R}$ be a function. The real number $L$ is said to be the approximate limit of the function $f$ if
$$
\lim\limits_{\delta\to 0^{+}}\frac{\left|\left\{x\in\mathbb{R}:\left|f\left(x\right)-L\right|\geq\varepsilon\right\}\cap(a-\delta,a+\delta)\right|}{2\delta}=0
$$
for each $\varepsilon>0$ \cite{denjoy}, and denoted by
\begin{equation}\label{eq2}
app\lim\limits_{x\to a}f\left(x\right)=L.
\end{equation}
This concept can also be given as follows. For each $\varepsilon>0$, there exists $\delta>0$ such that the set
$$
\mathcal{L}\left(f,\mathbb{R},a,L,\delta,\varepsilon\right)=\left\{x\in(a-\delta,a+\delta)\backslash\left\{a\right\}:\left|f\left(x\right)-L\right|\geq\varepsilon\right\}
$$
has zero density at point $a$, where the "density" of a measurable set $E$ at point $a$ is given by the following limit (if any)
\begin{equation}\label{eq3}
\lim\limits_{\delta\to 0^{+}}\frac{\left|E\cap(a-\delta,a+\delta)\right|}{2\delta}
\end{equation}
and $\left|\cdot\right|$ denotes the Lebesgue measure of a set \cite{pap}. Approximate limit has the properties of linearity, multiplicativity, and decomposition which means that if the function $f$ holds (\ref{eq2}), then
$$
f\left(x\right)=g\left(x\right)+h\left(x\right)
$$
for each $x\in\mathbb{R}$, where
$$
\lim\limits_{x\to a}g\left(x\right)=L
$$
and the set $\left\{x\in\mathbb{R}:h\left(x\right)\ne0\right\}$ has zero density at point $a$.

This paper investigates what happens when the set given in \ref{eq1} is not empty or not of zero density, but is taken in a different form. More specifically, we investigate what the definition of limit gives us when this set is finite, of empty limit points, countable or of zero measure, and whether a new definition of limit can be given in this way. When a new limit definition is given, it is usually examined whether it is equivalent to previous limit definitions.  We also examine whether the definitions we give are equivalent to previous limit definitions. We distinguish the equivalent ones and show why the different ones are different. Finally, we examine the properties of our new limit concepts such as existence, uniqueness, additivity and multiplicativity.

\section{Main Results}\label{sec2}

In this section, we give the types of limits defined before, the types of limits we define and the relationship among them. We denote the types of limit we will use in this study by $T_{1},T_{2},\dots$ which mean type1, type2,... First, we give the classical concept of limit and denote it by $T_{1}$.
\begin{definition}
Let $A\subset\mathbb{R}$, $f:A\to\mathbb{R}$ be a function, $L\in\mathbb{R}$ and $a$ be a limit point of $A$. If for each real $\varepsilon>0$, there exists a real $\delta_{\varepsilon}>0$ such that the condition
$$
\left\{x\in\left((a-\delta_{\varepsilon},a+\delta_{\varepsilon})\backslash\left\{a\right\}\right)\cap{A}:\left|f\left(x\right)-L\right|\geq\varepsilon\right\}=\varnothing
$$
holds, then $L$ is called the limit of the function $f$ at point $a$. In this study, we denote classical limit by
$$
T_{1}\lim\limits_{x\to{a}}f\left(x\right)=L.
$$
\end{definition}
Now, we give the definition of the approximate limit from our point of view.
\begin{definition}
Let $A\subset\mathbb{R}$ be a measurable set, $f:A\to\mathbb{R}$ be a function, and $a,L\in\mathbb{R}$. If for each real $\varepsilon>0$, there exists a real $\delta_{\varepsilon}>0$ such that the set
$$
\left\{x\in\left((a-\delta_{\varepsilon},a+\delta_{\varepsilon})\backslash\left\{a\right\}\right)\cap{A}:\left|f\left(x\right)-L\right|\geq\varepsilon\right\}
$$
has zero density at point $a$ (see (\ref{eq3})), then $L$ is called the approximate limit of the function $f$ at point $a$. In this study, we denote this limit by
$$
T_{2}\lim\limits_{x\to{a}}f\left(x\right)=L.
$$
\end{definition}
If a function has $T_{1}$ limit and it is $L$, then the number $L$ is also $T_{2}$ limit of this function. However, the converse is not true. Below is an example. 
\begin{example}\label{exmp1}
Consider the function $D:\mathbb{R}\to\mathbb{R}$
$$
D\left(x\right)=\left\{
\begin{array}{ll}
1,&x\in\mathbb{Q}\\
0,&x\notin\mathbb{Q}
\end{array}
\right.
$$
known as Dirichlet's function. This function has no $T_{1}$ limit, while $T_{2}$ limit is 0 since the set of Rational numbers has zero measure. 
\end{example}
The limits $T_{1}$ and $T_{2}$ are known concepts. Now we give new limit concepts and argue whether each of them is equivalent to $T_{1}$ or $T_{2}$ or not. 
\begin{definition}
Let $A\subset\mathbb{R}$, $f:A\to\mathbb{R}$ be a function and $a,L\in\mathbb{R}$. If for each real $\varepsilon>0$, there exists a real $\delta_{\varepsilon}>0$ such that the set
$$
\left\{x\in\left((a-\delta_{\varepsilon},a+\delta_{\varepsilon})\backslash\left\{a\right\}\right)\cap{A}:\left|f\left(x\right)-L\right|\geq\varepsilon\right\}
$$
is finite, then we say $L$ is $T_{3}$ limit of the function $f$ at point $a$. In this study, we denote this limit by
$$
T_{3}\lim\limits_{x\to{a}}f\left(x\right)=L.
$$
\end{definition}
\begin{theorem}\label{thm1}
Let $A\subset\mathbb{R}$, $f:A\to\mathbb{R}$ be a function and $a,L\in\mathbb{R}$. Then,
$$
T_{1}\lim\limits_{x\to{a}}f\left(x\right)=L\Leftrightarrow T_{3}\lim\limits_{x\to{a}}f\left(x\right)=L
$$
i.e., $T_{1}$ and $T_{3}$ limits are equivalent to each other.
\end{theorem}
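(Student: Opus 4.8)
The plan is to prove the two implications separately; the forward direction is immediate and all the content sits in the converse.

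For $T_{1}\Rightarrow T_{3}$, I would simply observe that if $L$ is the $T_{1}$ limit of $f$ at $a$, then for each $\varepsilon>0$ there is a $\delta_{\varepsilon}>0$ with $\mathcal{L}\left(f,A,a,L,\delta_{\varepsilon},\varepsilon\right)=\varnothing$, and the empty set is finite; hence the very same $\delta_{\varepsilon}$ witnesses the $T_{3}$ condition for that $\varepsilon$. This is essentially a one-line remark.

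For $T_{3}\Rightarrow T_{1}$, fix $\varepsilon>0$ and pick $\delta_{\varepsilon}>0$ so that the set $S:=\mathcal{L}\left(f,A,a,L,\delta_{\varepsilon},\varepsilon\right)$ is finite. If $S=\varnothing$, then $\delta_{\varepsilon}$ already witnesses the $T_{1}$ condition for this $\varepsilon$. If $S\neq\varnothing$, then, since $S$ is finite and the point $a$ is explicitly deleted from the set defining $S$, the number $d:=\min\left\{\lvert x-a\rvert:x\in S\right\}$ is attained and is strictly positive. I would then set $\delta':=\min\left\{\delta_{\varepsilon},d\right\}$ and check that $\mathcal{L}\left(f,A,a,L,\delta',\varepsilon\right)=\varnothing$: any $x$ in this set would satisfy $0<\lvert x-a\rvert<\delta'\leq\delta_{\varepsilon}$ together with $\lvert f(x)-L\rvert\geq\varepsilon$, hence $x\in S$, forcing $\lvert x-a\rvert\geq d\geq\delta'$, which contradicts $\lvert x-a\rvert<\delta'$. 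As $\varepsilon>0$ was arbitrary, $L$ is the $T_{1}$ limit of $f$ at $a$.

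The only subtlety worth flagging is the role of deleting $a$: it is exactly what guarantees that a finite "bad set" $S$ is bounded away from $a$, so that $\delta_{\varepsilon}$ can be shrunk to $\delta'$ and the finitely many bad points evicted. No appeal to $a$ being a limit point of $A$ is needed, and the argument is insensitive to the structure of $A$, so I expect no genuine obstacle here.
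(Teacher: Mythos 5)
Your proposal is correct and follows essentially the same route as the paper's own proof: the forward direction via the finiteness of $\varnothing$, and the converse by shrinking $\delta_{\varepsilon}$ below the minimal distance from $a$ to the finitely many bad points, which is positive precisely because $a$ is deleted from the defining set. Your explicit treatment of the case $S=\varnothing$ is a minor tidiness the paper leaves implicit, but there is no substantive difference.
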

\begin{proof}
Let $T_{1}\lim\limits_{x\to{a}}f\left(x\right)=L$. Then, the condition
$$
\forall\varepsilon>0, \exists\delta_{\varepsilon}>0: \left\{x\in\left((a-\delta_{\varepsilon},a+\delta_{\varepsilon})\backslash\left\{a\right\}\right)\cap{A}:\left|f\left(x\right)-L\right|\geq\varepsilon\right\}=\varnothing
$$
holds. Since $\varnothing$ is also a finite set, then $L$ is also $T_{3}$ limit of the function $f$ at point $a$.

Conversely, let $T_{3}\lim\limits_{x\to{a}}f\left(x\right)=L$. For each $\varepsilon>0$, there exists $\delta_{\varepsilon}>0$ such that the set
$$
\left\{x\in\left((a-\delta_{\varepsilon},a+\delta_{\varepsilon})\backslash\left\{a\right\}\right)\cap{A}:\left|f\left(x\right)-L\right|\geq\varepsilon\right\}
$$
is finite. Denote the above finite set by $\left\{t_{1},t_{2},\dots t_{n}\right\}$. One can easily see that $t_{k}\ne a$ for each $k=\overline{1,n}$. By taking $\delta_{\varepsilon}^{\prime}=\min\left\{\delta_{\varepsilon},\left|t_{1}-a\right|,\left|t_{2}-a\right|,\dots \left|t_{n}-a\right|\right\}$, we have
$$
\forall\varepsilon>0, \exists\delta_{\varepsilon}^{\prime}>0: \left\{x\in\left((a-\delta_{\varepsilon}^{\prime},a+\delta_{\varepsilon}^{\prime})\backslash\left\{a\right\}\right)\cap{A}:\left|f\left(x\right)-L\right|\geq\varepsilon\right\}=\varnothing.
$$
This completes the proof.
\end{proof}
\begin{definition}
Let $A\subset\mathbb{R}$, $f:A\to\mathbb{R}$ be a function and $a,L\in\mathbb{R}$. If for each real $\varepsilon>0$, there exists a real $\delta_{\varepsilon}>0$ such that the condition
$$
\left\{x\in\left((a-\delta_{\varepsilon},a+\delta_{\varepsilon})\backslash\left\{a\right\}\right)\cap{A}:\left|f\left(x\right)-L\right|\geq\varepsilon\right\}^{\prime}=\varnothing
$$
holds (i.e., that set has no limit points), then we say $L$ is $T_{4}$ limit of the function $f$ at point $a$. In this study, we denote this limit by
$$
T_{4}\lim\limits_{x\to{a}}f\left(x\right)=L.
$$
\end{definition}
As known from fundamental analysis, a bounded set of reals is finite if and only if it has no limit points. Thus, it can be concluded that  $T_{3}$ and $T_{4}$ limits are equivalent to each other, i.e., they are equivalent to the classical limit ($T_{1}$ limit). Consequently, they are not new limit concepts. Theorem \ref{thm1} and the following discussions show that the methods $T_{3}$ and $T_{4}$ fail to define a novel concept of limit. Now, we present the concept of the $T_{5}$ limit as a new limit.
\begin{definition}
Let $A\subset\mathbb{R}$, $f:A\to\mathbb{R}$ be a function and $a,L\in\mathbb{R}$. If for each real $\varepsilon>0$, there exists a real $\delta_{\varepsilon}>0$ such that the set
$$
\left\{x\in\left((a-\delta_{\varepsilon},a+\delta_{\varepsilon})\backslash\left\{a\right\}\right)\cap{A}:\left|f\left(x\right)-L\right|\geq\varepsilon\right\}
$$
is countable, then we say $L$ is $T_{5}$ limit of the function $f$ at point $a$. In this study, we denote this limit by
$$
T_{5}\lim\limits_{x\to{a}}f\left(x\right)=L.
$$
\end{definition}
If $L\in\mathbb{R}$ is the $T_{1}$ limit of a function, then it is also $T_{5}$ limit of that function. 
Besides, every countable set has zero measure. So, if $L\in\mathbb{R}$ is the $T_{5}$ limit of a function, then it is also $T_{2}$ limit of that function. In this study, we denote that relation among that limit concepts by
\begin{equation}\label{eq4}
T_{1}=T_{3}=T_{4}\subset T_{5}\subset T_{2}.
\end{equation}

Now, we give two examples to show that  $T_{5}$ limit is different from the previous ones. The first example is the Dirichlet function given in Example \ref{exmp1}. There is no $T_{1}$ limit this function anywhere. However, its $T_{5}$ limit is 0 at each point of Reals since the set of Rational numbers is countable. Before giving the second example, it is important to review the Cantor set and a significant property.

The Cantor set $C$ is obtained by dividing the interval $\left[0,1\right]$ into 3 equal parts, discarding the middle part and continuing this process infinitely:
$$
\begin{array}{c}
C_{0}=\left[0,1\right],\\
C_{1}=\left[0,\frac{1}{3}\right]\cup\left[\frac{2}{3},1\right],\\
C_{2}=\left[0,\frac{1}{9}\right]\cup\left[\frac{2}{9},\frac{1}{3}\right]\cup\left[\frac{2}{3},\frac{7}{9}\right]\cup\left[\frac{8}{9},1\right],\\
\vdots\\
C=\bigcap\limits_{n=0}^{\infty}C_{n}.
\end{array}
$$
$C$ is an uncountable set with zero measure \cite{balcireel}.
\begin{lemma}
    The set $C\cap\left(0,\delta\right)$ is uncountable for each positive real $\delta$.
\end{lemma}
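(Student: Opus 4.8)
The plan is to exploit the self-similarity of the Cantor set: the portion of $C$ lying in the leftmost interval $\left[0,3^{-n}\right]$ of the $n$-th stage $C_{n}$ is a faithfully rescaled copy of $C$ itself, and for $n$ large this interval is contained in $(0,\delta)$ apart from its two endpoints. So the whole argument reduces to a bijection together with the removal of finitely many points.

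First I would fix $\delta>0$ and choose $n\in\mathbb{N}$ with $3^{-n}\le\delta$, which is possible since $3^{-n}\to 0$. Then $C\cap(0,3^{-n})\subseteq C\cap(0,\delta)$, so it suffices to prove the left-hand set is uncountable. The main step is the identity $C\cap\left[0,3^{-n}\right]=3^{-n}C$, where $3^{-n}C=\left\{3^{-n}t:t\in C\right\}$. This follows by induction on the stages: at stage $k$ the leftmost of the $2^{k}$ closed intervals is $\left[0,3^{-k}\right]$, and the passage from $C_{k}$ to $C_{k+1}$ subdivides $\left[0,3^{-k}\right]$ in exactly the same pattern as the subdivision of $\left[0,1\right]$ rescaled by $3^{-k}$; intersecting over all $k$ gives the claim. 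Since $t\mapsto 3^{-n}t$ is a bijection of $\mathbb{R}$ and $C$ is uncountable, $3^{-n}C$ is uncountable.

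Finally, $0$ and $3^{-n}$ both belong to $C$ (they are endpoints surviving every stage), and because $C\subseteq\left[0,1\right]$ forces $3^{-n}C\subseteq\left[0,3^{-n}\right]$, these are the only two points of $C\cap\left[0,3^{-n}\right]=3^{-n}C$ not lying in the open interval $(0,3^{-n})$. Hence $C\cap(0,3^{-n})=3^{-n}C\setminus\left\{0,3^{-n}\right\}$ is an uncountable set with two points removed, so it is still uncountable, and therefore so is $C\cap(0,\delta)$. As an alternative that sidesteps the geometric induction, one can instead use the ternary description $C=\left\{\sum_{k\ge1}a_{k}3^{-k}:a_{k}\in\left\{0,2\right\}\right\}$: the points of $C\cap(0,3^{-n})$ are precisely those with $a_{1}=\cdots=a_{n}=0$ and $(a_{n+1},a_{n+2},\dots)$ neither identically $0$ nor identically $2$, and since digit strings in $\left\{0,2\right\}$ give unique expansions this set is in bijection with $\left\{0,2\right\}^{\mathbb{N}}$ minus two sequences, hence uncountable.

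The only genuine obstacle is rigorously justifying the self-similarity identity $C\cap\left[0,3^{-n}\right]=3^{-n}C$ (equivalently, the ternary characterization of which points of $C$ fall in $(0,3^{-n})$); this is a routine but slightly fiddly induction on the construction of $C$. Everything after that step is elementary cardinality bookkeeping.
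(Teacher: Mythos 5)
Your argument is correct and is essentially the same as the paper's: both pick $n$ with $3^{-n}<\delta$, use the scaling bijection $x\mapsto 3^{n}x$ between $C\cap\left[0,3^{-n}\right]$ and $C$ to transfer uncountability, and then discard finitely many points to pass to the open interval. The extra detail you supply (the induction behind $C\cap\left[0,3^{-n}\right]=3^{-n}C$, and the ternary-digit alternative) only elaborates what the paper takes for granted.
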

\begin{proof}
    Since $\delta$ is positive, there exists $n\in\mathbb{N}$ such that $\frac{1}{3^{n}}<\delta$. The interval $\left[0,\frac{1}{3^{n}}\right]$ is the first part of $C_{n}$ which we use to define the Cantor set. The function $\phi\left(x\right)=3^{n}x$ is a bijection between $C\cap\left[0,\frac{1}{3^{n}}\right]$ and $C$. Thus, the set $C\cap\left[0,\frac{1}{3^{n}}\right]$ is uncountable, and consequently, $C\cap\left[0,\delta\right)$ which includes $C\cap\left[0,\frac{1}{3^{n}}\right]$ is uncountable. A singleton does not change the cardinality of any set. So, $C\cap\left(0,\delta\right)$ is also uncountable.
\end{proof}
Now, we can give the second example.
\begin{example}\label{exmp2}
Consider the characteristic function of the Cantor set on $\mathbb{R}$, $\chi_{C}:\mathbb{R}\to\mathbb{R}$
$$
\chi_{C}\left(x\right)=\left\{
\begin{array}{ll}
1,&x\in C,\\
0,&x\notin C.
\end{array}
\right.
$$
and choose $a=L=0$. By the relation
$$
\forall\varepsilon>0, 0\le\lim\limits_{\delta\to0^{+}}\frac{\left|\left\{x\in\left(-\delta,\delta\right)\setminus\left\{0\right\}:\left|\chi_{C}\left(x\right)\right|\ge\varepsilon\right\}\right|}{2\delta}\le\lim\limits_{\delta\to0^{+}}\frac{\left|C\right|}{2\delta}=\lim\limits_{\delta\to0^{+}}\frac{0}{2\delta}=0,
$$
we have
$$
T_{2}\lim\limits_{x\to{0}}\chi_{C}\left(x\right)=0.
$$
On the other hand, since the set
$$
\left\{x\in\left(-\delta,\delta\right)\setminus\left\{0\right\}:\left|\chi_{C}\left(x\right)\right|\ge1\right\}=C\cap\left(0,\delta\right)
$$
is uncountable for each $\delta>0$, then
$$
T_{5}\lim\limits_{x\to{0}}\chi_{C}\left(x\right)\ne0
$$
or does not exist.
\end{example}
Example \ref{exmp1} and Example \ref{exmp2} show that the concept of $T_{5}$ limit is different from $T_{1}$ and $T_{2}$. So, the relation (\ref{eq4}) turns into the following
\begin{equation}\label{eq5}
T_{1}=T_{3}=T_{4}\subsetneqq T_{5}\subsetneqq T_{2}.
\end{equation}

Now, we give a characterization for the uniqueness of $T_{5}$ limit.
\begin{theorem}\label{thm2}
Let $A\subset\mathbb{R}$ and $a\in\mathbb{R}$. Every function that has a $T_5$ limit at point $a$ has a unique limit if and only if the set $\left(\left(a-\delta,a+\delta\right)\setminus\left\{a\right\}\right)\cap A$ is uncountable for each $\delta>0$.
\end{theorem}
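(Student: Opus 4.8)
The plan is to prove the two implications separately, treating the ``only if'' direction by contrapositive, and to rely only on two elementary facts: the union of two countable sets is countable, and an uncountable set cannot be contained in a union of two countable sets.

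\medskip
\noindent\emph{Sufficiency (uncountability $\Rightarrow$ uniqueness).} Assume $\left(\left(a-\delta,a+\delta\right)\setminus\left\{a\right\}\right)\cap A$ is uncountable for every $\delta>0$. Suppose some $f$ has $T_{5}$ limits $L_{1}$ and $L_{2}$ at $a$ with $L_{1}\ne L_{2}$, and set $\varepsilon=\tfrac{1}{2}\lvert L_{1}-L_{2}\rvert>0$. Choose $\delta_{1},\delta_{2}>0$ from the definition of the $T_{5}$ limit applied to $L_{1}$ and $L_{2}$ respectively, put $\delta=\min\{\delta_{1},\delta_{2}\}$, and write $S=\left(\left(a-\delta,a+\delta\right)\setminus\left\{a\right\}\right)\cap A$. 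Then the sets $E_{i}=\{x\in S:\lvert f(x)-L_{i}\rvert\ge\varepsilon\}$ for $i=1,2$ are each countable, so $S\setminus(E_{1}\cup E_{2})$ is nonempty (in fact uncountable) because $S$ is uncountable. Any $x$ in this set satisfies $\lvert L_{1}-L_{2}\rvert\le\lvert f(x)-L_{1}\rvert+\lvert f(x)-L_{2}\rvert<2\varepsilon=\lvert L_{1}-L_{2}\rvert$, a contradiction; hence $L_{1}=L_{2}$.

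\medskip
\noindent\emph{Necessity (contrapositive).} Suppose there is $\delta_{0}>0$ with $S_{0}=\left(\left(a-\delta_{0},a+\delta_{0}\right)\setminus\left\{a\right\}\right)\cap A$ countable. Then for \emph{any} function $f\colon A\to\mathbb{R}$, for every $L\in\mathbb{R}$, and for every $\varepsilon>0$, the set $\{x\in\left(\left(a-\delta_{0},a+\delta_{0}\right)\setminus\left\{a\right\}\right)\cap A:\lvert f(x)-L\rvert\ge\varepsilon\}$ is a subset of $S_{0}$, hence countable. Thus every real number is a $T_{5}$ limit of $f$ at $a$. Taking, say, the constant function $f\equiv 0$, we see that $f$ has a $T_{5}$ limit at $a$ but not a unique one (both $0$ and $1$ qualify), so the left-hand statement of the equivalence fails.

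\medskip
The argument is essentially routine; the only point deserving care is recognizing the degenerate case in the necessity direction, namely that once some punctured neighborhood meets $A$ in a countable set the $T_{5}$ limit condition imposes no restriction at all, so even a constant function already destroys uniqueness. I do not anticipate a genuine obstacle beyond bookkeeping with the choice of $\delta=\min\{\delta_{1},\delta_{2}\}$.
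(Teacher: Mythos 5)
Your proof is correct and follows essentially the same route as the paper's: the sufficiency direction uses the same countable-union argument to find a point $x$ with $\lvert f(x)-L_i\rvert<\varepsilon$ for both limits, and the necessity direction is the same observation that a countable punctured neighborhood makes every real number a $T_5$ limit. Your version is marginally tidier in fixing $\varepsilon=\tfrac{1}{2}\lvert L_1-L_2\rvert$ rather than letting $\varepsilon$ range, and in explicitly exhibiting a function (the constant $0$) that has a $T_5$ limit, which the paper's proof leaves implicit.
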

\begin{proof}
Assume that there exists a real $\delta_{0}>0$ such that $\left(\left(a-\delta_{0},a+\delta_{0}\right)\setminus\left\{a\right\}\right)\cap A$ is countable, $f:A\to\mathbb{R}$ is a function and $L$ is an arbitrary real number. Since
$$
\begin{array}{c}
\mathcal{L}\left(f,A,a,L,\delta_{0},\varepsilon\right)=\left\{x\in\left((a-\delta_{0},a+\delta_{0})\backslash\left\{a\right\}\right)\cap{A}:\left|f\left(x\right)-L\right|\geq\varepsilon\right\}\\
\subset\left(\left(a-\delta_{0},a+\delta_{0}\right)\setminus\left\{a\right\}\right)\cap A,
\end{array}
$$
then the set $\mathcal{L}\left(f,A,a,L,\delta_{0},\varepsilon\right)$ is countable for each $\varepsilon>0$. The last shows that all the real $L$ are $T_{5}$ limits of the function $f$. Conversely, assume that the set $\left(\left(a-\delta,a+\delta\right)\setminus\left\{a\right\}\right)\cap A$ is uncountable for each real $\delta>0$. We show that a function has a unique $T_{5}$ limit if it exists. Let $L_{1}$ and $L_{2}$ be $T_{5}$ limit of a function $f$ from $A$ to $\mathbb{R}$ at point $a$. There exist $\delta_{\varepsilon}^{\prime},\delta_{\varepsilon}^{\prime\prime}>0$ such that the sets $\left\{x\in\left((a-\delta_{\varepsilon}^{\prime},a+\delta_{\varepsilon}^{\prime})\backslash\left\{a\right\}\right)\cap{A}:\left|f\left(x\right)-L_{1}\right|\geq\varepsilon\right\}$ and $\left\{x\in\left((a-\delta_{\varepsilon}^{\prime\prime},a+\delta_{\varepsilon}^{\prime\prime})\backslash\left\{a\right\}\right)\cap{A}:\left|f\left(x\right)-L_{2}\right|\geq\varepsilon\right\}$ are countable for each $\varepsilon>0$.  By choosing $\delta_{\varepsilon}=\min\left\{\delta_{\varepsilon}^{\prime},\delta_{\varepsilon}^{\prime\prime}\right\}$, we have the sets
$$
\mathcal{L}\left(f,A,a,L_{1},\delta_{\varepsilon},\varepsilon\right)=\left\{x\in\left((a-\delta_{\varepsilon},a+\delta_{\varepsilon})\backslash\left\{a\right\}\right)\cap{A}:\left|f\left(x\right)-L_{1}\right|\geq\varepsilon\right\}
$$
and
$$
\mathcal{L}\left(f,A,a,L_{2},\delta_{\varepsilon},\varepsilon\right)=\left\{x\in\left((a-\delta_{\varepsilon},a+\delta_{\varepsilon})\backslash\left\{a\right\}\right)\cap{A}:\left|f\left(x\right)-L_{2}\right|\geq\varepsilon\right\}
$$
are countable. Then, $\mathcal{L}\left(f,A,a,L_{1},\delta_{\varepsilon},\varepsilon\right)\cup\mathcal{L}\left(f,A,a,L_{2},\delta_{\varepsilon},\varepsilon\right)$ is also countable. Since the set $\left(\left(a-\delta_{\varepsilon},a+\delta_{\varepsilon}\right)\setminus\left\{a\right\}\right)\cap A$ is uncountable, then
$$
\left(\left(\left(a-\delta_{\varepsilon},a+\delta_{\varepsilon}\right)\setminus\left\{a\right\}\right)\cap A\right)\setminus\left(\mathcal{L}\left(f,A,a,L_{1},\delta_{\varepsilon},\varepsilon\right)\cup\mathcal{L}\left(f,A,a,L_{2},\delta_{\varepsilon},\varepsilon\right)\right)
$$
is uncountable. Let $x_{\varepsilon}$ be an element of the above set. Then, the inequalities
$$
\left|f\left(x_{\varepsilon}\right)-L_{1}\right|<\varepsilon,\ \left|f\left(x_{\varepsilon}\right)-L_{2}\right|<\varepsilon
$$
hold. So, the relation
$$
\left|L_{1}-L_{2}\right|\le\left|f\left(x_{\varepsilon}\right)-L_{1}\right|+\left|f\left(x_{\varepsilon}\right)-L_{2}\right|<2\varepsilon
$$
is satisfied for each $\varepsilon>0$. The last shows that $L_{1}=L_{2}$. This completes the proof.
\end{proof}
We now prove a decomposition theorem for $T_{5}$ limit.
\begin{theorem}\label{thm3}
Let $A\subset\mathbb{R}$, $f:A\to\mathbb{R}$ be a function and $a,L\in\mathbb{R}$. Then, $L$ is $T_{5}$ limit of the function $f$ at point $a$ if and only if there exist two functions $g,h:A\to\mathbb{R}$ and $\delta_{0}>0$ such that $f\left(x\right)=g\left(x\right)+h\left(x\right)$ for each $x\in A$, $\lim\limits_{x\to a}g\left(x\right)=L$ and the set $\left\{x\in\left(\left(a-\delta_{0},a+\delta_{0}\right)\setminus\left\{a\right\}\right)\cap A:h\left(x\right)\ne0\right\}$ is countable.
\end{theorem}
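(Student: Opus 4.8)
The plan is to prove the two implications separately. The reverse direction is routine bookkeeping with the definitions; the forward direction requires constructing the decomposition explicitly, and that is the only place where a little care is needed.

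For the reverse implication, assume $f=g+h$ on $A$ with $\lim\limits_{x\to a}g\left(x\right)=L$ (classical $T_{1}$ limit) and with $E_{0}:=\left\{x\in\left(\left(a-\delta_{0},a+\delta_{0}\right)\setminus\left\{a\right\}\right)\cap A:h\left(x\right)\ne0\right\}$ countable. Fix $\varepsilon>0$. Using the classical limit of $g$, I would choose $\eta_{\varepsilon}>0$ so that $\left|g\left(x\right)-L\right|<\varepsilon$ for every $x\in\left(\left(a-\eta_{\varepsilon},a+\eta_{\varepsilon}\right)\setminus\left\{a\right\}\right)\cap A$, and set $\delta_{\varepsilon}:=\min\left\{\delta_{0},\eta_{\varepsilon}\right\}$. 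If $x$ lies in $\left(\left(a-\delta_{\varepsilon},a+\delta_{\varepsilon}\right)\setminus\left\{a\right\}\right)\cap A$ with $\left|f\left(x\right)-L\right|\ge\varepsilon$, then $h\left(x\right)\ne0$ (otherwise $f\left(x\right)=g\left(x\right)$ would force $\left|g\left(x\right)-L\right|\ge\varepsilon$, contradicting the choice of $\eta_{\varepsilon}$), so $x\in E_{0}$. Hence the $T_{5}$-exceptional set of $f$ at level $\varepsilon$ is a subset of the countable set $E_{0}$, so it is countable, and $T_{5}\lim\limits_{x\to a}f\left(x\right)=L$.

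For the forward implication, suppose $T_{5}\lim\limits_{x\to a}f\left(x\right)=L$. For each $n\in\mathbb{N}$ I would apply the definition with $\varepsilon=1/n$ to obtain $d_{n}>0$ such that $\left\{x\in\left(\left(a-d_{n},a+d_{n}\right)\setminus\left\{a\right\}\right)\cap A:\left|f\left(x\right)-L\right|\ge1/n\right\}$ is countable. Then replace these radii by $\delta_{n}:=\min\left\{d_{1},\dots,d_{n},1/n\right\}$, so that $\left(\delta_{n}\right)$ is nonincreasing and tends to $0$, while each $E_{n}:=\left\{x\in\left(\left(a-\delta_{n},a+\delta_{n}\right)\setminus\left\{a\right\}\right)\cap A:\left|f\left(x\right)-L\right|\ge1/n\right\}$ is still countable (being contained in the original countable set). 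Set $N:=\bigcup_{n=1}^{\infty}E_{n}$, which is countable as a countable union of countable sets, and note $N\subset\left(\left(a-\delta_{1},a+\delta_{1}\right)\setminus\left\{a\right\}\right)\cap A$. Now define $h\left(x\right):=f\left(x\right)-L$ for $x\in N$ and $h\left(x\right):=0$ otherwise, put $g:=f-h$ (so $g\equiv L$ on $N$ and $g=f$ off $N$), and set $\delta_{0}:=\delta_{1}$. Then $f=g+h$ on $A$, and $\left\{x\in\left(\left(a-\delta_{0},a+\delta_{0}\right)\setminus\left\{a\right\}\right)\cap A:h\left(x\right)\ne0\right\}\subset N$ is countable.

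It then remains to verify $\lim\limits_{x\to a}g\left(x\right)=L$ in the classical sense. Given $\varepsilon>0$, choose $n$ with $1/n<\varepsilon$ and work with the radius $\delta_{n}$: for $x\in\left(\left(a-\delta_{n},a+\delta_{n}\right)\setminus\left\{a\right\}\right)\cap A$, either $x\in N$, where $g\left(x\right)=L$, or $x\notin N$, in which case $x\notin E_{n}$ yields $\left|f\left(x\right)-L\right|<1/n<\varepsilon$ and $g\left(x\right)=f\left(x\right)$; in both cases $\left|g\left(x\right)-L\right|<\varepsilon$, which is exactly the $T_{1}$ condition. The only subtle point in the whole argument is the passage from $d_{n}$ to $\delta_{n}$: one must shrink the radii so that they decrease to $0$ — this is what keeps $N$ confined near $a$, so that the estimate "off $N$, the function already lies within $\varepsilon$ of $L$ on a whole neighbourhood" holds — while simultaneously keeping each exceptional set countable, and then one invokes the fact that a countable union of countable sets is countable. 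Everything else is a straightforward unwinding of the definitions.
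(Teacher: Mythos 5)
Your proof is correct and follows essentially the same route as the paper's: the same containment argument for the reverse direction, and the same decomposition ($g=L$ on the union of the exceptional sets $E_n$, $g=f$ elsewhere, $h=f-g$) for the forward direction. The only difference is a minor bookkeeping choice at the end — you take $\delta_{0}=\delta_{1}$ after shrinking the radii, whereas the paper takes $\delta_{0}=\sup_{n}\delta_{\frac{1}{n}}$ (which could fail to be a finite real number); your choice quietly sidesteps that small issue, though the shrinking of the radii to a decreasing sequence is not actually needed for the rest of the argument.
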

\begin{proof}
Assume that there exist two functions $g,h:A\to\mathbb{R}$ and there exists a real $\delta_{0}>0$ such that $f\left(x\right)=g\left(x\right)+h\left(x\right)$ for each $x\in A$, $\lim\limits_{x\to a}f\left(x\right)=L$ and the set $\left\{x\in\left(\left(a-\delta_{0},a+\delta_{0}\right)\setminus\left\{a\right\}\right)\cap A:h\left(x\right)\ne0\right\}$ is countable. Given $\varepsilon>0$. Since $\lim\limits_{x\to a}g\left(x\right)=L$, then
$$
\exists\delta_{\varepsilon}^{*}>0:\ \left\{x\in\left((a-\delta_{\varepsilon}^{*},a+\delta_{\varepsilon}^{*})\backslash\left\{a\right\}\right)\cap{A}:\left|g\left(x\right)-L\right|\geq\varepsilon\right\}=\varnothing.
$$
Let's take $\delta_{\varepsilon}=\min\left\{\delta_{0},\delta_{\varepsilon}^{*}\right\}$. Since the relation
$$
\begin{array}{c}
\left\{x\in\left((a-\delta_{\varepsilon},a+\delta_{\varepsilon})\backslash\left\{a\right\}\right)\cap{A}:\left|f\left(x\right)-L\right|\geq\varepsilon\right\}\\
\subset\left\{x\in\left(\left(a-\delta_{\varepsilon},a+\delta_{\varepsilon}\right)\cap A\right)\setminus\left\{a\right\}:h\left(x\right)\ne0\right\}
\end{array}
$$
holds and the set $\left\{x\in\left(\left(a-\delta_{\varepsilon},a+\delta_{\varepsilon}\right)\cap A\right)\setminus\left\{a\right\}:h\left(x\right)\ne0\right\}$ is countable, then the set $\left\{x\in\left((a-\delta_{\varepsilon},a+\delta_{\varepsilon})\backslash\left\{a\right\}\right)\cap{A}:\left|f\left(x\right)-L\right|\geq\varepsilon\right\}$ is also countable. Hence, we have
$$
T_{5}\lim\limits_{x\to{a}}f\left(x\right)=L.
$$
Conversely, assume that the relation above holds. There exists $\delta_{\varepsilon}>0$ such that the set
$$
\left\{x\in\left((a-\delta_{\varepsilon},a+\delta_{\varepsilon})\backslash\left\{a\right\}\right)\cap{A}:\left|f\left(x\right)-L\right|\geq\varepsilon\right\}
$$
is countable for each $\varepsilon>0$. Assume that $\varepsilon=\frac{1}{n}$ and $n$ is a natural number. The set
$$
\mathcal{L}\left(f,A,a,L,\delta_{\frac{1}{n}},\frac{1}{n}\right)=\left\{x\in\left((a-\delta_{\frac{1}{n}},a+\delta_{\frac{1}{n}})\backslash\left\{a\right\}\right)\cap{A}:\left|f\left(x\right)-L\right|\geq\frac{1}{n}\right\}
$$
is countable for each $n\in\mathbb{N}$. We denote the above set by $A_{n}$. We define $g:A\to\mathbb{R}$ by the equality
$$
g\left(x\right)=\left\{
\begin{array}{ll}
L,&\exists k\in\mathbb{N}: x\in A_{k},\\
f\left(x\right),&\text{otherwise.}
\end{array}
\right.
$$
Given $\varepsilon>0$. There exists $n\in\mathbb{N}$ such that $\frac{1}{n}<\varepsilon$. Assume that $x\in A$ and $0<\left|x-a\right|<\delta_{\frac{1}{n}}$. If $\exists k\in\mathbb{N}$ such that $x\in A_{k}$, $\left|g\left(x\right)-L\right|=0<\varepsilon$; otherwise, $\left|g\left(x\right)-L\right|=\left|f\left(x\right)-L\right|<\frac{1}{n}<\varepsilon$. This shows the relation $\lim\limits_{x\to a}g\left(x\right)=L$ holds.

We also define $h:A\to\mathbb{R}$ by $h\left(x\right)=f\left(x\right)-g\left(x\right)$. Then, the relation
$$
\left\{x\in\left(\left(a-\delta_{\frac{1}{n}},a+\delta_{\frac{1}{n}}\right)\setminus\left\{a\right\}\right)\cap A:h\left(x\right)\ne0\right\}\subset\bigcup\limits_{k=1}^{\infty}A_{k}
$$
holds for each $n\in\mathbb{N}$. So, the set $\left\{x\in\left(\left(a-\delta_{\frac{1}{n}},a+\delta_{\frac{1}{n}}\right)\setminus\left\{a\right\}\right)\cap A:h\left(x\right)\ne0\right\}$ is countable. Let's take $\delta_{0}=\sup\limits_{n\in\mathbb{N}}\delta_{\frac{1}{n}}$. Since
$$
\begin{array}{c}
\left\{x\in\left(\left(a-\delta_{0},a+\delta_{0}\right)\setminus\left\{a\right\}\right)\cap A:h\left(x\right)\ne0\right\}\\
=\bigcup\limits_{n=1}^{\infty}\left\{x\in\left(\left(a-\delta_{\frac{1}{n}},a+\delta_{\frac{1}{n}}\right)\setminus\left\{a\right\}\right)\cap A:h\left(x\right)\ne0\right\},
\end{array}
$$
then the set $\left\{x\in\left(\left(a-\delta_{0},a+\delta_{0}\right)\setminus\left\{a\right\}\right)\cap A:h\left(x\right)\ne0\right\}$ is also countable. This completes the proof.
\end{proof}
Some algebraic properties of $T_{5}$ limit are given in the following theorem
\begin{theorem}\label{thm4}
Let $A\subset\mathbb{R}$, $f_{1},f_{2}:A\to\mathbb{R}$ be two functions and $a,\lambda,L_{1},L_{2}\in\mathbb{R}$. If $T_{5}\lim\limits_{x\to{a}}f_{1}\left(x\right)=L_{1}$ and $T_{5}\lim\limits_{x\to{a}}f_{2}\left(x\right)=L_{2}$, then the relations
\begin{enumerate}
    \item $T_{5}\lim\limits_{x\to{a}}\left(f_{1}\left(x\right)+f_{2}\left(x\right)\right)=L_{1}+L_{2}$,
    \item $T_{5}\lim\limits_{x\to{a}}\left(f_{1}\left(x\right)f_{2}\left(x\right)\right)=L_{1}L_{2}$,
    \item $T_{5}\lim\limits_{x\to{a}}\frac{f_{1}\left(x\right)}{f_{2}\left(x\right)}=\frac{L_{1}}{L_{2}}$, $\left(L_{2}\ne0\right)$
    \item $T_{5}\lim\limits_{x\to{a}}\left(\lambda f_{1}\left(x\right)\right)=\lambda L_{1}$
\end{enumerate}
hold.
\end{theorem}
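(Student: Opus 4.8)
The plan is to reduce all four identities to the decomposition theorem (Theorem~\ref{thm3}) together with the classical ($T_{1}$) limit laws for sums, products, quotients and scalar multiples. First I would apply Theorem~\ref{thm3} to each hypothesis: there are functions $g_{i},h_{i}:A\to\mathbb{R}$ and reals $\delta^{(i)}>0$ with $f_{i}=g_{i}+h_{i}$ on $A$, $\lim\limits_{x\to a}g_{i}\left(x\right)=L_{i}$, and $E_{i}:=\left\{x\in\left(\left(a-\delta^{(i)},a+\delta^{(i)}\right)\setminus\left\{a\right\}\right)\cap A:h_{i}\left(x\right)\ne0\right\}$ countable, for $i=1,2$. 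Put $\delta_{0}=\min\left\{\delta^{(1)},\delta^{(2)}\right\}$, so that on the punctured $\delta_{0}$-ball both perturbations are supported on countable sets and I may work on one common neighborhood. The common strategy for each item is then: write the combined function as ``good part'' $+$ ``bad part'', where the good part is built from the $g_{i}$ and has the claimed classical limit, and the bad part is nonzero only on a countable subset of $\left(\left(a-\delta_{0},a+\delta_{0}\right)\setminus\left\{a\right\}\right)\cap A$; then invoke the converse direction of Theorem~\ref{thm3}.

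For item~(1), write $f_{1}+f_{2}=\left(g_{1}+g_{2}\right)+\left(h_{1}+h_{2}\right)$. The classical sum rule gives $\lim\limits_{x\to a}\left(g_{1}+g_{2}\right)\left(x\right)=L_{1}+L_{2}$, and $\left\{x:\left(h_{1}+h_{2}\right)\left(x\right)\ne0\right\}\subset\left\{h_{1}\ne0\right\}\cup\left\{h_{2}\ne0\right\}$, so its trace on the punctured $\delta_{0}$-ball lies in the countable set $E_{1}\cup E_{2}$; Theorem~\ref{thm3} yields $T_{5}\lim\limits_{x\to a}\left(f_{1}+f_{2}\right)\left(x\right)=L_{1}+L_{2}$. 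Item~(4) is the same computation with $\lambda f_{1}=\left(\lambda g_{1}\right)+\left(\lambda h_{1}\right)$ (and is trivial when $\lambda=0$). For item~(2), expand $f_{1}f_{2}=g_{1}g_{2}+\left(g_{1}h_{2}+h_{1}g_{2}+h_{1}h_{2}\right)$; the classical product rule gives $\lim\limits_{x\to a}g_{1}g_{2}\left(x\right)=L_{1}L_{2}$, and the bracketed term vanishes at every point where $h_{1}\left(x\right)=h_{2}\left(x\right)=0$, hence is nonzero only on a subset of $E_{1}\cup E_{2}$ on the $\delta_{0}$-ball; Theorem~\ref{thm3} applies again.

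Item~(3) I would obtain by first proving $T_{5}\lim\limits_{x\to a}\frac{1}{f_{2}\left(x\right)}=\frac{1}{L_{2}}$ and then applying item~(2) to $f_{1}$ and $1/f_{2}$. Since $g_{2}\to L_{2}\ne0$, there is $\delta_{1}\in\left(0,\delta_{0}\right]$ with $\left|g_{2}\left(x\right)\right|\ge\left|L_{2}\right|/2>0$ on $\left(\left(a-\delta_{1},a+\delta_{1}\right)\setminus\left\{a\right\}\right)\cap A$; on that set $1/g_{2}$ is defined and, by the classical quotient rule, $1/g_{2}\to1/L_{2}$. On the punctured $\delta_{1}$-ball write $\frac{1}{f_{2}}=\frac{1}{g_{2}}+\bigl(\frac{1}{f_{2}}-\frac{1}{g_{2}}\bigr)$; wherever $h_{2}\left(x\right)=0$ one has $f_{2}\left(x\right)=g_{2}\left(x\right)\ne0$ and the parenthesised term is $0$, so it is nonzero only on a subset of $E_{2}$, hence countable, and Theorem~\ref{thm3} gives the $T_{5}$ limit of $1/f_{2}$ on its natural domain $\left\{x\in A:f_{2}\left(x\right)\ne0\right\}$. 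Since a $T_{5}$ limit is clearly unchanged under restriction of the domain (the defining sets only shrink, and subsets of countable sets are countable), $f_{1}$ restricted to $\left\{x\in A:f_{2}\left(x\right)\ne0\right\}$ still has $T_{5}$ limit $L_{1}$, and item~(2) delivers $T_{5}\lim\limits_{x\to a}\frac{f_{1}\left(x\right)}{f_{2}\left(x\right)}=L_{1}\cdot\frac{1}{L_{2}}=\frac{L_{1}}{L_{2}}$.

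The set-inclusion bookkeeping and the classical limit computations are routine; the only place that needs genuine care is the quotient, where I must restrict to a neighborhood on which the ``good'' denominator $g_{2}$ is bounded away from $0$ before passing to reciprocals, and keep track of the fact that $1/f_{2}$ lives on the smaller domain $\left\{x\in A:f_{2}\left(x\right)\ne0\right\}$ rather than on $A$ (noting also that $f_{2}$ agrees with the nonvanishing $g_{2}$ outside a countable set near $a$, so this domain is the ``expected'' one). Once these observations are in place, the reduction of~(3) to~(2) is automatic, and I do not anticipate any further obstacle.
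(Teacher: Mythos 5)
Your proposal is correct and follows essentially the same route as the paper: both directions of Theorem~\ref{thm3} combined with the classical limit laws, using the identical decompositions for the sum and product, and reducing the quotient to a reciprocal (your $\frac{1}{f_{2}}-\frac{1}{g_{2}}$ is algebraically the paper's $-\frac{h}{g\left(g+h\right)}$). If anything, you are more careful than the paper about where $1/g_{2}$ and $1/f_{2}$ are actually defined near $a$, which is a point the published proof glosses over.
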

\begin{proof}
Since the relations $T_{5}\lim\limits_{x\to{a}}f_{1}\left(x\right)=L_{1}$ and $T_{5}\lim\limits_{x\to{a}}f_{2}\left(x\right)=L_{2}$ hold, then there exist four functions $g_{1},g_{2},h_{1},h_{2}$ from $A$ to $\mathbb{R}$ and there exists a real $\delta_{0}>0$ such that $f_{1}\left(x\right)=g_{1}\left(x\right)+h_{1}\left(x\right)$, $f_{2}\left(x\right)=g_{2}\left(x\right)+h_{2}\left(x\right)$ for each $x\in A$; $\lim\limits_{x\to{a}}g_{1}\left(x\right)=L_{1}$, $\lim\limits_{x\to{a}}g_{2}\left(x\right)=L_{2}$; $\left\{x\in\left(\left(a-\delta_{0},a+\delta_{0}\right)\setminus\left\{a\right\}\right)\cap A:h_{1}\left(x\right)\ne0\right\}$ and $\left\{x\in\left(\left(a-\delta_{0},a+\delta_{0}\right)\setminus\left\{a\right\}\right)\cap A:h_{2}\left(x\right)\ne0\right\}$ are countable.
\begin{enumerate}
    \item The equalities $f_{1}\left(x\right)+f_{2}\left(x\right)=g_{1}\left(x\right)+g_{2}\left(x\right)+h_{1}\left(x\right)+h_{2}\left(x\right)$ for each $x\in A$ and $\lim\limits_{x\to a}\left(g_{1}\left(x\right)+g_{2}\left(x\right)\right)=L_{1}+L_{2}$ are valid. Since the proposition
$$
h_{1}\left(x\right)+h_{2}\left(x\right)\ne0\Rightarrow h_{1}\left(x\right)\ne0\lor h_{2}\left(x\right)\ne0
$$
holds, then the relation
$$
\begin{array}{c}
\left\{x\in\left(\left(a-\delta_{0},a+\delta_{0}\right)\setminus\left\{a\right\}\right)\cap A:h_{1}\left(x\right)+h_{2}\left(x\right)\ne0\right\}\\
\subset\left\{x\in\left(\left(a-\delta_{0},a+\delta_{0}\right)\setminus\left\{a\right\}\right)\cap A:h_{1}\left(x\right)\ne0\right\}\\
\cup\left\{x\in\left(\left(a-\delta_{0},a+\delta_{0}\right)\setminus\left\{a\right\}\right)\cap A:h_{2}\left(x\right)\ne0\right\}
\end{array}
$$
is also holds. So, the set
$$
\left\{x\in\left(\left(a-\delta_{0},a+\delta_{0}\right)\setminus\left\{a\right\}\right)\cap A:h_{1}\left(x\right)+h_{2}\left(x\right)\ne0\right\}
$$
is countable. By Theorem \ref{thm3}, we have
$$
T_{5}\lim\limits_{x\to{a}}\left(f_{1}\left(x\right)+f_{2}\left(x\right)\right)=L_{1}+L_{2}.
$$
\item The equalities $f_{1}\left(x\right)f_{2}\left(x\right)=g_{1}\left(x\right)g_{2}\left(x\right)+g_{1}\left(x\right)h_{2}\left(x\right)+g_{2}\left(x\right)h_{1}\left(x\right)+h_{1}\left(x\right)h_{2}\left(x\right)$ for each $x\in A$ and $\lim\limits_{x\to a}g_{1}\left(x\right)g_{2}\left(x\right)=L_{1}L_{2}$ are valid. We define a new function $h$ by $h\left(x\right)=h_{1}\left(x\right)g_{2}\left(x\right)+h_{2}\left(x\right)\left(g_{1}\left(x\right)+h_{1}\left(x\right)\right)$. Since the relation
$$
\begin{array}{c}
\left\{x\in\left(\left(a-\delta_{0},a+\delta_{0}\right)\setminus\left\{a\right\}\right)\cap A:h\left(x\right)\ne0\right\}\\
\subset\left\{x\in\left(\left(a-\delta_{0},a+\delta_{0}\right)\setminus\left\{a\right\}\right)\cap A:h_{1}\left(x\right)\ne0\right\}\\
\cup\left\{x\in\left(\left(a-\delta_{0},a+\delta_{0}\right)\setminus\left\{a\right\}\right)\cap A:h_{2}\left(x\right)\ne0\right\}
\end{array}
$$
holds, then the set $\left\{x\in\left(\left(a-\delta_{0},a+\delta_{0}\right)\setminus\left\{a\right\}\right)\cap A:h\left(x\right)\ne0\right\}$ is countable. By Theorem \ref{thm3}, we have
$$
T_{5}\lim\limits_{x\to{a}}f_{1}\left(x\right)f_{2}\left(x\right)=L_{1}L_{2}.
$$
\item First, let's prove that the equality $T_{5}\lim\limits_{x\to{a}}\frac{1}{f\left(x\right)}=\frac{1}{L}$ is satisfied provided that  $T_{5}\lim\limits_{x\to{a}}f\left(x\right)=L$ and $L\ne0$. By Theorem \ref{thm3}, there exist two functions $g,h$ and there exists real $\delta_{0}>0$ such that $f\left(x\right)=g\left(x\right)+h\left(x\right)$ for each $x\in A$, $\lim\limits_{x\to a}f\left(x\right)=L$ and the set
$$
\left\{x\in\left(\left(a-\delta_{0},a+\delta_{0}\right)\setminus\left\{a\right\}\right)\cap A:h\left(x\right)\ne0\right\}
$$
is countable. Since the relations
$$
\begin{array}{c}
\frac{1}{f\left(x\right)}=\frac{1}{g\left(x\right)}-\frac{h\left(x\right)}{g\left(x\right)\left(g\left(x\right)+h\left(x\right)\right)},\\
\lim\limits_{x\to{a}}\frac{1}{g\left(x\right)}=\frac{1}{L},\\
\left\{x\in\left(\left(a-\delta_{0},a+\delta_{0}\right)\setminus\left\{a\right\}\right)\cap A:-\frac{h\left(x\right)}{g\left(x\right)\left(g\left(x\right)+h\left(x\right)\right)}\ne0\right\}
\\
\subset\left\{x\in\left(\left(a-\delta_{0},a+\delta_{0}\right)\setminus\left\{a\right\}\right)\cap A:h\left(x\right)\ne0\right\}
\end{array}
$$
hold, then we have
$$
T_{5}\lim\limits_{x\to{a}}\frac{1}{f\left(x\right)}=\frac{1}{L}
$$
by Theorem \ref{thm3} again. Consequently,
$$
\begin{array}{c}
T_{5}\lim\limits_{x\to{a}}\frac{f_{1}\left(x\right)}{f_{2}\left(x\right)}=T_{5}\lim\limits_{x\to{a}}f_{1}\left(x\right)\frac{1}{f_{2}\left(x\right)}\\ =\left(T_{5}\lim\limits_{x\to{a}}f_{1}\left(x\right)\right)\left(T_{5}\lim\limits_{x\to{a}}\left(x\right)\frac{1}{f_{2}\left(x\right)}\right)=\frac{L_{1}}{L_{2}}.
\end{array}
$$
\item Directly obtained from the second equality of Theorem \ref{thm4}.
\end{enumerate}
\end{proof}
Now, we present the concept of the $T_{6}$ limit as a new limit.
\begin{definition}
Let $A\subset\mathbb{R}$, $f:A\to\mathbb{R}$ be a function and $a,L\in\mathbb{R}$. If for each real $\varepsilon>0$, there exists a real $\delta_{\varepsilon}>0$ such that
$$
\left|\left\{x\in\left((a-\delta_{\varepsilon},a+\delta_{\varepsilon})\backslash\left\{a\right\}\right)\cap{A}:\left|f\left(x\right)-L\right|\geq\varepsilon\right\}\right|=0,
$$
then we say $L$ is $T_{6}$ limit of the function $f$ at point $a$, where $\left|K\right|$ denotes the Lebesgue measure of the set $K$. In this study, we denote this limit by
$$
T_{6}\lim\limits_{x\to{a}}f\left(x\right)=L.
$$
\end{definition}
Any countable set has a measure of zero. So, we have
\begin{equation}\label{eq6}
T_{5}\lim\limits_{x\to{a}}f\left(x\right)=L\Rightarrow T_{6}\lim\limits_{x\to{a}}f\left(x\right)=L,
\end{equation}
but the converse is not true. The characteristic function of the Cantor set given in Example \ref{exmp2} has a 0 limit in the sense of $T_{6}$ at point $a=0$, but it has no $T_{5}$ limit at the same point.

Now, we investigate the relation between $T_{2}$ and $T_{6}$ limits. If a set has zero measure, then the density of the set at any point of $\mathbb{R}$ is zero. So, we have
\begin{equation}\label{eq7}
T_{6}\lim\limits_{x\to{a}}f\left(x\right)=L\Rightarrow T_{2}\lim\limits_{x\to{a}}f\left(x\right)=L.
\end{equation}
We now give an example to show that the converse is not true.
\begin{example}\label{exmp3}
Consider the set
$$
\Omega=\bigcup\limits_{n=1}^{\infty}\left[\frac{1}{n}-\frac{1}{2^{n}},\frac{1}{n}\right).
$$
First, we show that the set $\Omega$ has zero density at point 0, i.e.,
\begin{equation}\label{eq8}
\lim\limits_{\delta\to0^{+}}\frac{\left|\Omega\cap\left(-\delta,\delta\right)\right|}{2\delta}=0.
\end{equation}
Given $\delta>0$. There exists a natural number $n_{\delta}$ such that the equality $\frac{1}{n_{\delta}+1}\le\delta<\frac{1}{n_{\delta}}$ holds. Then, the relation
$$
\frac{\left|\Omega\cap\left(-\delta,\delta\right)\right|}{2\delta}\le\frac{\left|\bigcup\limits_{k=n_{\delta}}^{\infty}\left[\frac{1}{k}-\frac{1}{2^{k}},\frac{1}{k}\right)\right|}{2\frac{1}{n_{\delta}+1}}=\frac{\left|\sum\limits_{k=n_{\delta}}^{\infty}\frac{1}{2^{k}}\right|}{\frac{2}{n_{\delta}+1}}=\frac{n_{\delta}+1}{2^{n_{\delta}+2}}
$$
holds. This proves the relation (\ref{eq8}).

Second, we prove that the inequality
\begin{equation}\label{eq9}
\left|\Omega\cap\left(-\delta,\delta\right)\right|>0
\end{equation}
for each $\delta>0$. There exists a natural number $n_{\delta}$ such that the equality $\frac{1}{n_{\delta}}<\delta$ holds for each real $\delta>0$. So, the relations
$$
\bigcup\limits_{k=n_{\delta}+1}^{\infty}\left[\frac{1}{k}-\frac{1}{2^{k}},\frac{1}{k}\right)\subset\Omega\cap\left(-\delta,\delta\right)
$$
and
$$
\left|\bigcup\limits_{k=n_{\delta}+1}^{\infty}\left[\frac{1}{k}-\frac{1}{2^{k}},\frac{1}{k}\right)\right|=\sum\limits_{k=n_{\delta}+1}^{\infty}\frac{1}{2^{k}}=\frac{1}{2^{n_{\delta}+2}}>0
$$
hold. This proves the relation (\ref{eq9}).

Now, we consider the characteristic function of $\chi_{\Omega}$ from $\mathbb{R}$ to $\mathbb{R}$. Assume that $a=L=0$. Given $\varepsilon>0$. Since,
$$
\lim\limits_{\delta\to0^{+}}\frac{\left|\left\{x\in\left(-\delta,\delta\right)\setminus\left\{0\right\}:\left|\chi_{\Omega}\left(x\right)\right|\ge\varepsilon\right\}\right|}{2\delta}=\lim\limits_{\delta\to0^{+}}\frac{\left|\Omega\cap\left(-\delta,\delta\right)\setminus\left\{0\right\}\right|}{2\delta}=0,
$$
then,
$$
T_{2}\lim\limits_{x\to0}\chi_{\Omega}\left(x\right)=0.
$$
On the other hand, since
$$
\left|\left\{x\in\left(-\delta,\delta\right)\setminus\left\{0\right\}:\left|\chi_{\Omega}\left(x\right)\right|\ge\varepsilon\right\}\right|=\left|\Omega\cap\left(-\delta,\delta\right)\setminus\left\{0\right\}\right|>0
$$
for each $\varepsilon,\delta>0$, then
$$
T_{6}\lim\limits_{x\to0}\chi_{\Omega}\left(x\right)\ne0.
$$
\end{example}
By (\ref{eq6}), (\ref{eq7}) and Example \ref{exmp3}, the relation (\ref{eq5}) turns into
$$
T_{1}=T_{3}=T_{4}\subsetneqq T_{5}\subsetneqq T_{6}\subsetneqq T_{2}.
$$
Now, we give the fundamental properties of $T_{6}$ limit. Note that the following theorems can be proved similarly to Theorem \ref{thm2}, \ref{thm3} and \ref{thm4}.
\begin{theorem}\label{thm5}
Let $A\subset\mathbb{R}$ be a measurable set and $a\in\mathbb{R}$. Every function that has a $T_6$ limit at point $a$ has a unique limit if and only if $\left|\left(\left(a-\delta,a+\delta\right)\setminus\left\{a\right\}\right)\cap A\right|>0$ for each $\delta>0$.
\end{theorem}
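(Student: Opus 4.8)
The plan is to mirror the proof of Theorem \ref{thm2}, replacing the word ``countable'' by ``of Lebesgue measure zero'' throughout and substituting for the cardinality facts their measure-theoretic analogues: monotonicity, finite (indeed countable) subadditivity, and the fact that a measurable set of positive measure minus a null set is still of positive outer measure, hence nonempty.

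First I would dispatch the ``only if'' direction by contraposition. Suppose there is a $\delta_0>0$ with $\left|\left(\left(a-\delta_0,a+\delta_0\right)\setminus\left\{a\right\}\right)\cap A\right|=0$. Fix an arbitrary $f:A\to\mathbb{R}$ and an arbitrary $L\in\mathbb{R}$. Since
$$
\mathcal{L}\left(f,A,a,L,\delta_0,\varepsilon\right)\subset\left(\left(a-\delta_0,a+\delta_0\right)\setminus\left\{a\right\}\right)\cap A,
$$
monotonicity of outer measure gives $\left|\mathcal{L}\left(f,A,a,L,\delta_0,\varepsilon\right)\right|=0$ for every $\varepsilon>0$; hence every real number $L$ is a $T_6$ limit of $f$ at $a$, so there is a function with a $T_6$ limit that does not have a unique one.

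For the ``if'' direction I would assume $\left|\left(\left(a-\delta,a+\delta\right)\setminus\left\{a\right\}\right)\cap A\right|>0$ for each $\delta>0$ and prove uniqueness exactly as in Theorem \ref{thm2}. Let $L_1,L_2$ both be $T_6$ limits of $f$ at $a$. Given $\varepsilon>0$, choose $\delta_\varepsilon',\delta_\varepsilon''>0$ so that $\mathcal{L}\left(f,A,a,L_1,\delta_\varepsilon',\varepsilon\right)$ and $\mathcal{L}\left(f,A,a,L_2,\delta_\varepsilon'',\varepsilon\right)$ are null, set $\delta_\varepsilon=\min\left\{\delta_\varepsilon',\delta_\varepsilon''\right\}$, and note that then $\mathcal{L}\left(f,A,a,L_1,\delta_\varepsilon,\varepsilon\right)$ and $\mathcal{L}\left(f,A,a,L_2,\delta_\varepsilon,\varepsilon\right)$ are null, hence so is their union. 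By subadditivity the set
$$
\left(\left(\left(a-\delta_\varepsilon,a+\delta_\varepsilon\right)\setminus\left\{a\right\}\right)\cap A\right)\setminus\left(\mathcal{L}\left(f,A,a,L_1,\delta_\varepsilon,\varepsilon\right)\cup\mathcal{L}\left(f,A,a,L_2,\delta_\varepsilon,\varepsilon\right)\right)
$$
has positive outer measure and is therefore nonempty; picking $x_\varepsilon$ in it yields $\left|f\left(x_\varepsilon\right)-L_1\right|<\varepsilon$ and $\left|f\left(x_\varepsilon\right)-L_2\right|<\varepsilon$, so $\left|L_1-L_2\right|<2\varepsilon$ for all $\varepsilon>0$, whence $L_1=L_2$.

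The argument is essentially mechanical, and the only places that require care — and where I expect the main obstacle, such as it is — are the measure-theoretic replacements for the counting steps of Theorem \ref{thm2}: that a subset of a null set is null, that a finite union of null sets is null, and, crucially, that deleting a null set from a measurable set of positive measure leaves a set of positive outer measure (hence nonempty). Since $A$ is assumed measurable, $\left(\left(a-\delta,a+\delta\right)\setminus\left\{a\right\}\right)\cap A$ is measurable, and all three facts follow from monotonicity and countable subadditivity of Lebesgue outer measure; one should also observe that, by completeness of Lebesgue measure, the condition ``$\left|\cdot\right|=0$'' in the definition of the $T_6$ limit already forces the relevant sets to be measurable, so no separate measurability verification for $\mathcal{L}\left(\dots\right)$ is needed.
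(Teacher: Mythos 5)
Your proposal is correct and is exactly the route the paper intends: the paper gives no separate proof of Theorem~\ref{thm5}, stating only that it ``can be proved similarly to Theorem~\ref{thm2}'', and your argument is precisely that adaptation, with the counting facts replaced by monotonicity and subadditivity of Lebesgue measure and the observation that removing a null set from a set of positive measure leaves a nonempty set. Both directions check out, so nothing further is needed.
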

\begin{theorem}\label{thm6}
Let $A\subset\mathbb{R}$, $f:A\to\mathbb{R}$ be a function and $a,L\in\mathbb{R}$. Then, $L$ is $T_{6}$ limit of the function $f$ at point $a$ if and only if there exist two functions $g,h:A\to\mathbb{R}$ and $\delta_{0}>0$ such that $f\left(x\right)=g\left(x\right)+h\left(x\right)$ for each $x\in A$, $\lim\limits_{x\to a}g\left(x\right)=L$ and $\left|\left\{x\in\left(\left(a-\delta_{0},a+\delta_{0}\right)\setminus\left\{a\right\}\right)\cap A:h\left(x\right)\ne0\right\}\right|=0$.
\end{theorem}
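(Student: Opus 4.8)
The plan is to follow the proof of Theorem \ref{thm3} almost verbatim, replacing the word ``countable'' throughout by ``of Lebesgue measure zero'' and using two standard measure-theoretic facts in place of the corresponding cardinality facts: a countable union of null sets is null (countable subadditivity), and every subset of a null set is measurable with measure zero (completeness of Lebesgue measure).

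For the ``if'' direction, suppose $f\left(x\right)=g\left(x\right)+h\left(x\right)$ on $A$, $\lim_{x\to a}g\left(x\right)=L$, and $N:=\left\{x\in\left(\left(a-\delta_{0},a+\delta_{0}\right)\setminus\left\{a\right\}\right)\cap A:h\left(x\right)\ne0\right\}$ satisfies $\left|N\right|=0$. Given $\varepsilon>0$, the ordinary limit supplies $\delta_{\varepsilon}^{*}>0$ with $\left\{x\in\left(\left(a-\delta_{\varepsilon}^{*},a+\delta_{\varepsilon}^{*}\right)\setminus\left\{a\right\}\right)\cap A:\left|g\left(x\right)-L\right|\ge\varepsilon\right\}=\varnothing$. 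Putting $\delta_{\varepsilon}=\min\left\{\delta_{0},\delta_{\varepsilon}^{*}\right\}$, I would note that if $x$ lies in the punctured $\delta_{\varepsilon}$-neighbourhood of $a$, $x\in A$, and $\left|f\left(x\right)-L\right|\ge\varepsilon$, then $h\left(x\right)\ne0$ (otherwise $f\left(x\right)=g\left(x\right)$ and $\left|g\left(x\right)-L\right|<\varepsilon$), so that $\mathcal{L}\left(f,A,a,L,\delta_{\varepsilon},\varepsilon\right)\subset N$. Being a subset of a null set, $\mathcal{L}\left(f,A,a,L,\delta_{\varepsilon},\varepsilon\right)$ is measurable with measure zero, which is precisely $T_{6}\lim_{x\to a}f\left(x\right)=L$.

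For the ``only if'' direction, assume $T_{6}\lim_{x\to a}f\left(x\right)=L$. For each $n\in\mathbb{N}$ choose $\delta_{1/n}>0$ so that $A_{n}:=\left\{x\in\left(\left(a-\delta_{1/n},a+\delta_{1/n}\right)\setminus\left\{a\right\}\right)\cap A:\left|f\left(x\right)-L\right|\ge\frac{1}{n}\right\}$ has measure zero, and define $g\left(x\right)=L$ when $x\in\bigcup_{k}A_{k}$ and $g\left(x\right)=f\left(x\right)$ otherwise, with $h=f-g$. The verification that $\lim_{x\to a}g\left(x\right)=L$ is identical to that in Theorem \ref{thm3}: given $\varepsilon>0$, pick $n$ with $\frac{1}{n}<\varepsilon$; for $x\in A$ with $0<\left|x-a\right|<\delta_{1/n}$ either $x$ belongs to some $A_{k}$ and $\left|g\left(x\right)-L\right|=0$, or $\left|g\left(x\right)-L\right|=\left|f\left(x\right)-L\right|<\frac{1}{n}<\varepsilon$. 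Since $\left\{x:h\left(x\right)\ne0\right\}\subset\bigcup_{k}A_{k}$, setting $\delta_{0}=\sup_{n}\delta_{1/n}$ gives
$$
\left\{x\in\left(\left(a-\delta_{0},a+\delta_{0}\right)\setminus\left\{a\right\}\right)\cap A:h\left(x\right)\ne0\right\}=\bigcup_{n=1}^{\infty}\left\{x\in\left(\left(a-\delta_{1/n},a+\delta_{1/n}\right)\setminus\left\{a\right\}\right)\cap A:h\left(x\right)\ne0\right\},
$$
a countable union of subsets of the null sets $A_{n}$, hence null.

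I do not anticipate a serious obstacle: the computations coincide with those of Theorem \ref{thm3}, and the only points worth stating with care are the two measure-theoretic replacements above — in particular that $\mathcal{L}\left(f,A,a,L,\delta_{\varepsilon},\varepsilon\right)$ and $\bigcup_{n}A_{n}$ are genuine measurable null sets. The harmless degenerate case $\sup_{n}\delta_{1/n}=\infty$ can be avoided at the outset by replacing each $\delta_{1/n}$ with $\min\left\{\delta_{1/n},1\right\}$, or simply by reading $\left(a-\delta_{0},a+\delta_{0}\right)$ as $\mathbb{R}$.
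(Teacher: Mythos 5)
Your proof is correct and is exactly the argument the paper intends: the paper omits the proof of Theorem~\ref{thm6}, stating only that it is analogous to Theorem~\ref{thm3}, and your adaptation carries that analogy out faithfully. The two measure-theoretic substitutes you single out (completeness of Lebesgue measure for subsets of null sets, and countable subadditivity for $\bigcup_{n}A_{n}$) are precisely the points that need to replace the cardinality facts, and your handling of $\delta_{0}=\sup_{n}\delta_{1/n}$ is fine.
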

\begin{theorem}\label{thm7}
Let $A\subset\mathbb{R}$, $f_{1},f_{2}:A\to\mathbb{R}$ be two functions and $a,\lambda,L_{1},L_{2}\in\mathbb{R}$. If $T_{6}\lim\limits_{x\to{a}}f_{1}\left(x\right)=L_{1}$ and $T_{6}\lim\limits_{x\to{a}}f_{2}\left(x\right)=L_{2}$, then the relations
\begin{enumerate}
    \item $T_{6}\lim\limits_{x\to{a}}\left(f_{1}\left(x\right)+f_{2}\left(x\right)\right)=L_{1}+L_{2}$,
    \item $T_{6}\lim\limits_{x\to{a}}\left(f_{1}\left(x\right)f_{2}\left(x\right)\right)=L_{1}L_{2}$,
    \item $T_{6}\lim\limits_{x\to{a}}\frac{f_{1}\left(x\right)}{f_{2}\left(x\right)}=\frac{L_{1}}{L_{2}}$, $\left(L_{2}\ne0\right)$
    \item $T_{6}\lim\limits_{x\to{a}}\left(\lambda f_{1}\left(x\right)\right)=\lambda L_{1}$
\end{enumerate}
hold.
\end{theorem}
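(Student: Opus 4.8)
The remark preceding the statement already signals the strategy: transcribe the proof of Theorem~\ref{thm4} almost word for word, replacing ``countable'' by ``of Lebesgue measure zero'' throughout, and replacing each appeal to the decomposition Theorem~\ref{thm3} by an appeal to its $T_6$ counterpart, Theorem~\ref{thm6}. The only fact about null sets the argument needs is that a finite (indeed countable) union of null sets is null, i.e. subadditivity of Lebesgue outer measure; this plays exactly the role that ``a finite/countable union of countable sets is countable'' played in the $T_5$ case.

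Concretely, I would start by applying Theorem~\ref{thm6} to $f_1$ and to $f_2$ to obtain $g_1,g_2,h_1,h_2\colon A\to\mathbb{R}$ and radii $\delta_0^{(1)},\delta_0^{(2)}>0$ with $f_i=g_i+h_i$ on $A$, $\lim_{x\to a}g_i(x)=L_i$ in the ordinary sense, and each set $\{x\in((a-\delta_0^{(i)},a+\delta_0^{(i)})\setminus\{a\})\cap A:h_i(x)\ne0\}$ of measure zero; put $\delta_0=\min\{\delta_0^{(1)},\delta_0^{(2)}\}$, so that both exceptional sets are null inside the common punctured interval. For part~(1), write $f_1+f_2=(g_1+g_2)+(h_1+h_2)$: the classical limit of $g_1+g_2$ is $L_1+L_2$, and since $h_1(x)+h_2(x)\ne0$ implies $h_1(x)\ne0$ or $h_2(x)\ne0$, the exceptional set of $h_1+h_2$ sits inside the union of two null sets and is therefore null; Theorem~\ref{thm6} then yields the claim. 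For part~(2), expand $f_1f_2=g_1g_2+h$ with $h=h_1g_2+h_2(g_1+h_1)$ as in Theorem~\ref{thm4}; $\lim_{x\to a}g_1g_2=L_1L_2$, and $\{h\ne0\}$ is again contained in $\{h_1\ne0\}\cup\{h_2\ne0\}$, hence null on the $\delta_0$-neighbourhood, so Theorem~\ref{thm6} applies. Part~(4) is the instance of part~(2) in which $f_2$ is the constant function $\lambda$ (whose $T_6$ limit is clearly $\lambda$). For part~(3) it is enough, exactly as in Theorem~\ref{thm4}, to prove $T_6\lim_{x\to a}\tfrac1{f(x)}=\tfrac1L$ when $T_6\lim_{x\to a}f(x)=L$ and $L\ne0$, and then to combine this with part~(2): take the decomposition $f=g+h$ from Theorem~\ref{thm6}, shrink $\delta_0$ so that $g$ is bounded away from $0$ there (possible since $g\to L\ne0$), and use $\tfrac1{f(x)}=\tfrac1{g(x)}-\tfrac{h(x)}{g(x)(g(x)+h(x))}$; the first term has ordinary limit $\tfrac1L$, and the set on which the second term is nonzero is a subset of $\{h\ne0\}$, hence null, so Theorem~\ref{thm6} finishes the argument.

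The one place that deserves a sentence rather than silence is the domain of $\tfrac1{f_2}$ in part~(3): $f_2$ may vanish on the (null) set where $h_2\ne0$, so $\tfrac1{f_2}$ need not be defined on all of $A$ near $a$, and the identity above is valid only where $f_2\ne0$. For a $T_6$ statement this is harmless — one restricts to $A\cap\{f_2\ne0\}$, or observes that altering $f_2$ on a null set changes neither its $T_6$ limit nor that of $\tfrac1{f_2}$ — but it ought to be stated explicitly. Beyond this bookkeeping there is no genuine obstacle; the proof is a routine rewriting of Theorem~\ref{thm4} with ``measure zero'' in place of ``countable'', which is precisely what the note before the theorem promises.
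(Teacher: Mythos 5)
Your proposal is correct and follows exactly the route the paper intends: the paper gives no explicit proof of Theorem~\ref{thm7}, stating only that it is proved analogously to Theorem~\ref{thm4} with Theorem~\ref{thm6} in place of Theorem~\ref{thm3}, and your rewriting (using subadditivity of Lebesgue measure where countability of countable unions was used before) is precisely that argument. Your added remark about the domain of $1/f_2$ in part~(3) is a point the paper itself glosses over even in the $T_5$ case, and is a welcome clarification rather than a deviation.
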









\end{document}